\newtheorem{theorem}{Theorem}[section]
\newtheorem{claim}{}[theorem]
\newtheorem{lemma}[theorem]{Lemma}
\newtheorem{conjecture}[theorem]{Conjecture}
\theoremstyle{definition}
\newcommand{\ceil}[1]{\left\lceil #1 \right\rceil}
\newcommand{\floor}[1]{\left\lfloor #1 \right\rfloor}
\newcommand{\cN}{\mathcal{N}}
\newcommand{\cP}{\mathcal{P}}
\newcommand{\cX}{\mathcal{X}}
\newcommand{\wh}{\widehat}
\newcommand{\con}{/}
\newcommand{\del}{\backslash}
\newcommand{\elem}{\varepsilon}
\DeclareMathOperator{\si}{si}
\DeclareMathOperator{\PG}{PG}
\DeclareMathOperator{\cl}{cl}
\DeclareMathOperator{\AG}{AG}
\newcommand{\bb}[1]{\mathbb{#1}}
\newenvironment{subproof}[1][\proofname]{%
	\begin{proof}[Subproof:]%
	}{%
	\end{proof}%
}
\author[Nelson]{Peter Nelson}
\author[Norin]{Sergey Norin}
\title{The smallest matroids with no large independent flat}
\begin{document}
\maketitle
\begin{abstract}
	We show that a simple rank-$r$ matroid with no $(t+1)$-element independent flat has at least as many elements as the matroid $M_{r,t}$ defined to be the direct sum of $t$ binary projective geometries whose ranks pairwise differ by at most $1$. We also show for $r \ge 2t$ that $M_{r,t}$ is the unique example for which equality holds.
\end{abstract}
\section{Introduction}

Call a set $S$ in a matroid $M$ a \emph{claw} of $M$ if $S$ is both a flat and an independent set of $M$. A \emph{$k$-claw} is a claw of size $k$. These objects were introduced by Bonamy et al. [\ref{bkknp}] and studied by Nelson and Nomoto [\ref{cf}]; both of these papers consider the structure of $3$-claw-free binary matroids. 

Here we deal with general matroids, and address the simple extremal question of determining the smallest simple rank-$r$ matroids omitting a given claw; we solve this problem and characterize the tight examples.  

Theorem 1.8 of [\ref{cf}] shows that, for $r \ge 4$, the unique smallest simple rank-$r$ binary matroid with no $3$-claw is the direct sum of two binary projective geometries of ranks $\floor{r/2}$ and $\ceil{r/2}$. We show that, perhaps surprisingly, the exact same construction is also extremal for general matroids, and that its natural generalization is still extremal for excluding larger claws. For integers $r \ge 1$ and $t \ge 1$, let $M_{r,t}$ denote the matroid that is the direct sum of $t$ (possibly empty) binary projective geometries, whose ranks sum to $r$ and pairwise differ by at most $1$. We prove the following, which was conjectured for the special case of binary matroids in [\ref{cf}].

\begin{theorem}\label{main}
	Let $r,t \ge 1$ be integers. If $M$ is a simple rank-$r$ matroid with no $(t+1)$-claw, then $|M| \ge |M_{r,t}|$. If equality holds and $r \ge 2t$, then $M \cong M_{r,t}$. 
\end{theorem}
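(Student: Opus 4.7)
I would prove the inequality $|M| \ge |M_{r,t}|$ by induction on $r+t$, with the trivial base covering $r \le t$, where $|M_{r,t}| = r$ and any simple rank-$r$ matroid has at least $r$ elements. Let $C$ be a maximum-size claw of $M$, with $s := |C| \le t$. If $s < t$, then $M$ has no $(s+1)$-claw, so induction at $(r,s)$ gives $|M| \ge |M_{r,s}|$; one then observes that $|M_{r,t'}|$ is monotonically non-increasing in $t'$ (since splitting a binary projective geometry of rank $k \ge 2$ into two of smaller positive ranks strictly decreases the element count, by an easy calculation), yielding $|M_{r,s}| \ge |M_{r,t}|$.

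In the main case $s = t$, the starting observation is that for any claw $I$ of $M/C$, the set $I \cup C$ is an independent flat of $M$ of size $|I|+t$ (using the correspondence between flats of $M$ containing $C$ and flats of $M/C$), hence a claw; maximality of $C$ forces $|I| = 0$, so every parallel class of $M/C$ has at least two elements. This yields the counting bound $|M| \ge t + 2|\ol{M/C}|$, where $\ol{M/C}$ is the simplification of $M/C$, of rank $r-t$. Combined with the easily verified identity $|M_{r,t}| = t + 2|M_{r-t,t}|$, the target inequality reduces to $|\ol{M/C}| \ge |M_{r-t,t}|$, which follows from induction as long as $\ol{M/C}$ has no $(t+1)$-claw.

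The main obstacle is verifying that $\ol{M/C}$ has no $(t+1)$-claw, which I would prove by contradiction via circuit elimination. A putative $(t+1)$-claw of $\ol{M/C}$ lifts to parallel classes $P_1,\ldots,P_{t+1}$ of $M/C$ (each of size $\ge 2$) with representatives $a_i \in P_i$ such that $C \cup \{a_1,\ldots,a_{t+1}\}$ is independent of rank $2t+1$. Set $F^* := \cl_M(\{a_1,\ldots,a_{t+1}\})$. If $F^* = \{a_1,\ldots,a_{t+1}\}$, then $F^*$ is itself a $(t+1)$-claw of $M$, contradicting the hypothesis. Otherwise pick $x \in F^* \setminus \{a_1,\ldots,a_{t+1}\}$; since $F^*$ is contained in the flat $C \cup P_1 \cup \cdots \cup P_{t+1}$ and $C \cup \{a_1,\ldots,a_{t+1}\}$ is independent (so $x \notin C$), one has $x \in P_i \setminus \{a_i\}$ for some $i$. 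Then $x$ belongs to two distinct circuits: the fundamental circuit $D \subseteq C \cup \{a_i,x\}$ coming from $x \in \cl_M(C \cup \{a_i\})$ (which must meet $C$, else simplicity would force $x=a_i$), and the fundamental circuit $D' \subseteq \{x,a_1,\ldots,a_{t+1}\}$, which is disjoint from $C$. Weak circuit elimination then produces a circuit contained in $(D \cup D') \setminus \{x\} \subseteq C \cup \{a_1,\ldots,a_{t+1}\}$, contradicting independence of the latter.

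For uniqueness when $r \ge 2t$, I would track equality through the inequalities above: $s = t$, each parallel class of $M/C$ has exactly two elements, and $\ol{M/C} \cong M_{r-t,t}$. Inductive uniqueness applies when $r \ge 3t$; the boundary range $2t \le r < 3t$ must be handled directly, the key instance $r = 2t$ being that $\ol{M/C}$ has rank $t$ and $t$ elements, hence equals the free matroid on $t$ elements. The remaining task is to reconstruct the direct-sum decomposition of $M$ from $C$ and the two-element parallel classes, by analyzing the fundamental circuit of each such class with respect to $C$ and arguing (using the hypothesis $r \ge 2t$ to produce enough room to construct auxiliary claws that rule out longer fundamental circuits) that each such circuit meets $C$ in exactly one element; the parallel classes and their $C$-partners then assemble into $t$ binary projective geometries of balanced ranks.
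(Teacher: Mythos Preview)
Your argument for the bound is correct and follows essentially the paper's strategy: contract a $t$-claw and use the recurrence $|M_{r,t}| = 2|M_{r-t,t}| + t$. The paper manufactures a $t$-claw by deleting a non-coloop and extracting one from the resulting $(t+1)$-claw; you instead take a maximum claw and, when it is too small, fall back on the monotonicity $|M_{r,s}| \ge |M_{r,t}|$. Either works. Your circuit-elimination argument that $\ol{M/C}$ has no $(t+1)$-claw is a correct direct proof of the special case of the paper's Lemma~\ref{contract} that is actually needed.

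The uniqueness half, however, has a genuine gap. First, the induction does not close on itself. When $2t \le r < 3t$ you have $t \le r - t < 2t$, and in this range Theorem~\ref{main} says nothing about uniqueness, so you cannot conclude $\ol{M/C} \cong M_{r-t,t}$ inductively. In fact there \emph{are} other equality examples at these ranks --- direct sums of longer circuits and coloops (see Lemma~\ref{lowrank} and Theorem~\ref{maintech}) --- and $\ol{M/C}$ could a priori be one of them. You acknowledge that this range ``must be handled directly'' but discuss only $r = 2t$; the paper needs a separate structural lemma (proved via a nontrivial argument in the dual) to cover it.

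Second, even granting $\ol{M/C} \cong M_{r-t,t}$ and that every parallel class has size two, reconstructing $M$ as $M_{r,t}$ is the heart of the proof and cannot be dispatched in a sentence. Your outline (fundamental circuits of parallel pairs are triangles, then ``assemble'') points in the right direction, but a full argument must show that each parallel pair attaches to a \emph{unique} element of $C$; that pairs in the same component of $\ol{M/C}$ attach to the \emph{same} element of $C$ (so the putative components of $M$ are even well-defined); that every line inside a putative component has three points; and finally that each component is a binary projective geometry. The paper establishes these through a chain of claims relying crucially on the $t=1$ characterisation of projective geometries (Lemma~\ref{base}, via Tutte's theorem) and on the low-rank lemma (Lemma~\ref{lowrank}) applied to $M|\cl_M(S \cup U)$ for suitable $t$-pseudoclaws $U$. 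None of these ingredients appear in your sketch, and the hint about ``auxiliary claws that rule out longer fundamental circuits'' does not obviously substitute for them.
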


Note that for $r \le t$, the matroid $M_{r,t}$ is free and therefore the theorem is trivial. For $t < r < 2t$, there is a rather tame family of  exceptional tight examples, which we describe in Theorem~\ref{maintech}. 
One can also ask a similar question with `simple' relaxed to `loopless'. (One must still insist that $M$ is loopless, since any matroid with a loop has no claw.) In this case the answer is much less interesting; the direct sum of $r-t$ parallel pairs and $t$ coloops has $2r-t$ elements and is the unique smallest loopless rank-$r$ matroid with no $(t+1)$-claw, this is a consequence of Lemma~\ref{lowrank} below. 

\vskip 5pt

\noindent {\bf Graph Theory.}
The study of the structure of $3$-claw-free binary matroids in [\ref{bkknp},\ref{cf}] was motivated by structural results in graph theory. In the context of these works, the graph-theoretic notions of induced subgraphs, cliques, chromatic number and forests have analogies in the setting of simple binary matroids: cliques are analogous to projective geometries in the sense of being maximal with a given rank, while claws correspond to induced forests. A graph-theoretic analogue of Theorem~\ref{main} using this correspondence would characterize graphs on $r$ vertices  with minimum number of edges and no induced forests of given size. From the matroidal point of view, the natural measure of the size of a forest is the number of edges, but there seem to exist no direct analogue of Theorem~\ref{main} using this measure. Defining the size of a forest as the number of its vertices works much better, as follows. 

Let $G_{n,t}$ denote the graph on $n$ vertices which is a disjoint union of $t$ complete subgraphs, whose sizes pairwise differ by at most one.  Tur\'an's classical theorem ~[\ref{turan}] is equivalent to the statement that $|E(G)| \geq |E(G_{n,t})|$ for every graph $G$ on $n$ vertices with no stable set of size $t+1$. This observation implies that the following graph-theoretic analogue of Theorem~\ref{main} generalizes Tur\'an's theorem for $n \geq 3t$. 

\begin{theorem}\label{graph} Let $n,t \ge 1$ be integers such that $n \geq 3t$.
	If $G$ is a graph on $n$ vertices having no forest on $2t+1$ vertices as an induced subgraph, then $|E(G)| \geq |E(G_{n,t})|$ and  the equality holds only if $G$ 
	If equality holds then $G \cong G_{n,t}$. 
\end{theorem}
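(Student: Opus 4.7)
The plan is to prove the theorem by induction on $t$. The base case $t = 1$ is immediate: every triple of vertices of $G$ must span a triangle, since any three vertices with at most two edges form an induced forest on three vertices, so $G \cong K_n = G_{n,1}$.

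For the inductive step with $t \ge 2$, the main strategy is to peel off a clique component. Suppose $G$ has a connected component $C$ that is a clique of size $\lceil n/t \rceil$, and write $G = C \sqcup G'$ where $n' = n - |C|$. I claim $G'$ contains no induced forest on $2t - 1$ vertices: if $F \subseteq V(G')$ were such a forest, then any two adjacent $a, b \in C$ (available since $|C| \ge 2$) would give $F \cup \{a, b\}$ as an induced forest on $2t + 1$ vertices in $G$, contradicting the hypothesis. The condition $n \ge 3t$ guarantees $n' \ge 3(t-1)$, so the inductive hypothesis applies to $G'$, giving $|E(G')| \ge |E(G_{n', t-1})|$. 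Combining with the $\binom{|C|}{2}$ edges inside $C$ and the recurrence $|E(G_{n,t})| = \binom{\lceil n/t \rceil}{2} + |E(G_{n - \lceil n/t \rceil,\,t-1})|$ (obtained by removing a largest clique from $G_{n,t}$), we obtain $|E(G)| \ge |E(G_{n,t})|$, with equality forcing $|C| = \lceil n/t \rceil$ and $G' \cong G_{n', t-1}$, hence $G \cong G_{n,t}$.

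The main obstacle is the case when $G$ has no suitable clique component; for example, the connected graph $K_{3,3,1}$ on $n=7$ vertices satisfies the hypothesis for $t = 2$ yet has no isolated clique. In such cases I would show $|E(G)| > |E(G_{n,t})|$ strictly, so the conclusion holds and uniqueness is automatic. The key tool is a capacity analysis: for a connected graph $H$, let $m(H)$ denote the size of a largest induced forest in $H$; the hypothesis gives $\sum_H m(H) \le 2t$ summed over the components of $G$, and each non-trivial component satisfies $m(H) \ge 2$ with equality exactly when $H$ is a clique. Thus at most $t$ components are non-trivial, and a configuration lacking $t$ balanced clique components must be ``suboptimal'' (having singletons, non-clique components, or unbalanced cliques), in which case a careful edge-count using convexity of $s \mapsto \binom{s}{2}$ and the slack from $n \ge 3t$ proves the strict inequality. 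Quantifying the edge excess for non-clique connected components, through a per-component lower bound relating $|E(H)|$, $|V(H)|$, and $m(H)$, is the most technically delicate step of the argument.
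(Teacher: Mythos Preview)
Your approach differs substantially from the paper's: you induct on $t$ by peeling off a clique component, whereas the paper (via Theorem~\ref{graph2eq}) inducts on $n$ by deleting a vertex of maximum degree, after first showing that such a vertex has degree at least $\max\bigl(3,\lceil n/t\rceil-1\bigr)$.

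There is a genuine gap, and in fact your central dichotomy is false. You claim that if $G$ has no clique component of size $\lceil n/t\rceil$ then $|E(G)|>|E(G_{n,t})|$ strictly. Take $n=6$, $t=2$, $G=K_4\sqcup K_1\sqcup K_1$. Any five vertices of $G$ include at least three from the $K_4$ and hence a triangle, so $G$ has no induced forest on $2t+1=5$ vertices; yet $G$ has no $K_3$ component and $|E(G)|=6=|E(G_{6,2})|$. So your strict inequality fails here, and the uniqueness you are trying to prove is itself false at $n=3t$. This is exactly why Theorem~\ref{graph2eq} asserts $G\cong G_{n,t}$ only for $n\ge 4t$, giving instead the weaker conclusion that every component is $K_1$, $K_3$, or $K_4$ in the range $2t\le n<4t$. (The printed statement of Theorem~\ref{graph} is visibly garbled mid-sentence; the uniqueness range consistent with the paper's own proof is $n\ge 4t$.)

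Even for the bound alone, your outline is incomplete: the ``per-component lower bound relating $|E(H)|$, $|V(H)|$, and $m(H)$'' that you invoke for non-clique components is never stated, and producing one is essentially the theorem itself restricted to connected graphs. Convexity of $s\mapsto\binom{s}{2}$ handles only the case where every component is already a clique; for a non-clique component you have offered no tool. The paper's vertex-deletion induction sidesteps this difficulty entirely, since deleting a high-degree vertex works uniformly regardless of the component structure.
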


We give a short proof of Theorem~\ref{graph}, obtained by adapting one of the standard proofs of Turan's theorem, in Section~\ref{s:graph}.
 
\vskip 5pt

\noindent {\bf Triangle-free matroids.}
The extremal examples in Theorem~\ref{main} have many triangles, and our proof techniques analyze triangles closely. It seems plausible that if $M$ is required to be triangle-free, then the sparsest examples, instead of projective geometries, come from binary affine geometries, which are triangle-free and have $2$-claws but no $3$-claws. (An affine geometry $\AG(r-1,2)$ is obtained from a projective geometry $\PG(r-1,2)$ by deleting a hyperplane.) This leads us to conjecture the following.

\begin{conjecture}\label{agc}
	Let $t,r$ be integers with $t \ge 1$ and $t|r$. If $M$ is a simple triangle-free matroid with no $(2t+1)$-claw, then $|M| \ge t 2^{r/t-1}$. 
\end{conjecture}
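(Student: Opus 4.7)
The natural strategy is induction on $r$, with the base case $r = t$ being immediate since $|M| \ge r = t$.

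For the inductive step, first consider the disconnected case $M = M_1 \oplus M_2$ with each $M_i$ of rank $\rho_i < r$ and maximum claw size $s_i$; note that $s_1 + s_2 \le 2t$, because a direct sum of claws is a claw. Applying the inductive hypothesis to each component (writing $t_i = s_i/2$, and taking for granted a mild generalization of the conjecture to parameters where divisibility does not hold exactly) gives $|M_i| \ge t_i \cdot 2^{\rho_i/t_i - 1}$. The weighted Jensen inequality applied to the convex function $x \mapsto 2^x$ with weights $t_i/t$ then yields $|M| \ge t \cdot 2^{r/t - 1}$; in the degenerate case $\sum t_i < t$, a monotonicity argument (valid for $r \ge 2t$) finishes the reduction.

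This leaves the connected case as the heart of the problem. Here I would follow the strategy suggested by Theorem~\ref{main}: take a maximum claw $S$ (of size at most $2t$), analyze how elements of $M \setminus \cl(S)$ attach to $S$, and seek a cocircuit $C^*$ whose deletion drops the rank of $M$ by one while preserving triangle-freeness and the no-$(2t+1)$-claw condition. One then closes by induction, provided $|C^*|$ is at least $t(2^{r/t-1} - 2^{(r-1)/t - 1})$; in the extremal matroid $\AG(r/t - 1, 2)^{\oplus t}$ this quantity is roughly $2^{r/t - 2}$, matching the size of cocircuits of single affine components, so the inductive step is tight on the extremal example.

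The main obstacle is that triangle-freeness is not preserved under contraction (for instance $\AG(r-1, 2)/e \cong \PG(r-2, 2)$ is saturated with triangles), and deletion does not reduce rank. Standard matroid minor arguments therefore fail, and one must find an operation analogous to ``deleting an affine hyperplane'' that both preserves triangle-freeness and the no-$(2t+1)$-claw condition while reducing rank by one. This appears to be the crucial missing ingredient; even the case $t = 1$, which amounts to showing that a simple triangle-free rank-$r$ matroid with no $3$-claw has at least $2^{r-1}$ elements, seems to require a genuinely new structural idea beyond what is available for Theorem~\ref{main}.
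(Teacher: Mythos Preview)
The statement is a conjecture in the paper and is not proved there, except in the case $t=1$ (Lemma~\ref{ag}). Your proposal is likewise not a proof: you explicitly flag gaps in both the connected and disconnected reductions, and you conclude that even $t=1$ ``seems to require a genuinely new structural idea beyond what is available for Theorem~\ref{main}.'' On this last point you are mistaken, and the paper's short argument for $t=1$ is worth knowing.

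The observation you miss is that although contraction destroys triangle-freeness, contracting a single element of a triangle-free simple matroid keeps it simple, and it drops the relevant claw parameter by one. Concretely: if $M$ is simple, triangle-free, and has no $3$-claw, then for any $e \in E(M)$ the matroid $M/e$ is simple (a parallel pair in $M/e$ would come from a triangle through $e$) and has no $2$-claw (a $2$-claw $I$ of $M/e$ makes $I \cup \{e\}$ a $3$-claw of $M$). Lemma~\ref{base} then gives $|M/e| \ge 2^{r-1}-1$, hence $|M| \ge 2^{r-1}$. For the equality characterization the paper notes that $M/e \cong \PG(r-2,2)$ for every $e$, so $M$ has no $U_{2,4}$-minor and is binary, and then invokes the Bose--Burton bound for triangle-free binary matroids.

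Your induction scheme would not recover this even at $t=1$: you search for a large cocircuit to delete while preserving both hypotheses, whereas the paper contracts one element, accepts the loss of triangle-freeness, and trades the affine target bound for the projective bound of Lemma~\ref{base}. For $t \ge 2$ the conjecture is open in the paper; your Jensen reduction for the disconnected case already presupposes a strengthening (half-integer $t_i$, no divisibility hypothesis) that is at least as strong as what you are trying to prove, so it does not constitute progress.
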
 

This conjectured bound holds with equality when $M$ is the direct sum of $t$ copies of a rank-$(r/t)$ binary affine geometry; these should be the only cases where equality holds. We prove this in the easy case where $t = 1$; see Lemma~\ref{ag}.

\vskip 5pt
In what follows, we use the notation of Oxley [\ref{oxley}]; flats of a matroid of rank $1$ and $2$ are \emph{points} and \emph{lines} respectively.  We additionally write $|M|$ for $E(M)$. A \emph{simplification} of $M$ is any matroid obtained from $M$ by deleting all loops and all but one element from each parallel class. All such matroids are clearly isomorphic; we write $\si(M)$ for a generic matroid isomorphic to a simplification of $M$, and write $\elem(M)$ for $|\si(M)|$, the number of points of $M$. A family $\cX$ of sets are \emph{skew} in a matroid $M$ if $r_M(\cup \cX) = \sum_{X \in \cX} r_M(X)$, and we say that a set $X$ is \emph{skew} to a set $Y$ if $\{X,Y\}$ is a skew family: i.e. $r_M(X \cup Y) = r_M(X) + r_M(Y)$.

\section{The Bound}

In this section we give the easy proof of the lower bound in Theorem~\ref{main}. Our first lemma shows that the property of being $(t+1)$-claw-free is essentially closed under contraction; if $F$ is a $k$-claw of some simplification of $M$, call $F$ a \emph{$k$-pseudoclaw} of $M$.

\begin{lemma}\label{contract}
	Let $k \ge 1$. If $M$ is a simple matroid and $X \subseteq E(M)$, then every $k$-pseudoclaw of $M \con X$ is a $k$-claw of $M$. 
\end{lemma}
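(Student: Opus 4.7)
The plan is to show that a $k$-pseudoclaw $F$ of $M \con X$---a $k$-claw of some simplification $S$ of $M \con X$---is both independent and a flat in $M$. Independence is essentially immediate: $F$ independent in $\si(M \con X)$ forces $F$ independent in $M \con X$, which unpacks as $r_M(F \cup X) = |F| + r_M(X)$, and this simultaneously gives that $F$ is independent in $M$ and that $F$ is skew to $X$ in $M$.

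Before attacking flatness I would record a short auxiliary claim, used twice below: in any loopless matroid $M$, if $F$ and $X$ are skew then $\cl_M(F) \cap \cl_M(X) = \emptyset$. Indeed, submodularity applied to $F \cup \{p\}$ and $X \cup \{p\}$ shows any common element $p$ satisfies $r_M(\{p\}) \le 0$, hence is a loop.

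For the flatness part I would argue by contradiction: take $e \in \cl_M(F) \setminus F$. The auxiliary claim rules out $e \in \cl_M(X)$, so $e$ is not in $X$ and is not a loop of $M \con X$; moreover $e \in \cl_{M \con X}(F)$ because $\cl_M(F) \subseteq \cl_M(F \cup X)$. Since $F$ is a flat of $S$, $e$ cannot lie in $E(S)$, so $e$ must be parallel in $M \con X$ to some $g \in E(S)$. Reapplying flatness of $F$ in $S$ to this $g$ (which lies in $\cl_{M \con X}(F) \cap E(S)$) forces $g \in F$.

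The main obstacle is then to rule out the existence of such distinct $e \in \cl_M(F) \setminus F$ and $g \in F$ that are parallel in $M \con X$. The plan is a submodularity argument on the two flats $A := \cl_M(\{g\} \cup X)$ and $B := \cl_M(F)$: because $\{g\} \subseteq F$ is skew to $X$, $r_M(A) = r_M(X) + 1$; and because $g \in F$, $A \cup B$ is sandwiched between $F \cup X$ and $\cl_M(F \cup X)$, pinning down $r_M(A \cup B) = r_M(F) + r_M(X)$. Submodularity then forces $r_M(A \cap B) \le 1$. But $A \cap B$ contains $g$ (immediate) and $e$ (because parallelism of $e$ to $g$ in $M \con X$ puts $e$ in $A$, while $e \in \cl_M(F) = B$ by hypothesis), so simplicity of $M$ yields $r_M(A \cap B) \ge 2$---the desired contradiction.
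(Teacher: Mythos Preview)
Your proof is correct and follows essentially the same route as the paper: establish that $F$ is independent in $M$ and skew to $X$, use skewness to rule out $\cl_M(F)\setminus F$ meeting $\cl_M(X)$, deduce that any $e \in \cl_M(F)\setminus F$ is parallel in $M/X$ to some $g \in F$, and finally contradict simplicity of $M$. Your closing submodularity computation is a cleaner and more explicit execution of that last step than the paper's own somewhat terse version.
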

\begin{proof}
	Let $M' = (M \con X) \del P$ be a simplification of $M \con X$, and suppose that $M'$ has a $k$-claw $F$. Since $F$ is independent in $M \con X$, it is independent in $M$, and is skew to $X$ in $M$. Since $F$ is a flat of $M \con X$, we have $\varnothing = \cl_{M'}(F) - F = \cl_M(X \cup F) - (F \cup X \cup P) \supseteq (\cl_M(F) - F)-(X \cup P)$, giving $\cl_M(F) - F \subseteq X \cup P$.  
	
	The sets $\cl_M(F)$ and $X$ are skew in $M$, so $\cl_M(F)-F \subseteq P$. Suppose that $e \in (\cl_M(F)-F) \cap P$. Then there exists $e' \in E(M')$ parallel to $e$ in $M \con X$; since $e' \in \cl_M(F)-X \subseteq \cl_{M \con X}(F)$ we also have $e \in \cl_{M \con X}(F)$ and so $e \in \cl_{M'}(F) = F$, contrary to the choice of $e$. It follows that $\cl_M(F)-F$ intersects neither $X$ nor $P$ so is empty; therefore $F$ is a $k$-claw of $M$. 
\end{proof}

Let $f(r,t) = |M_{r,t}|$. Since a rank-$n$ projective geometry has $2^n-1$ elements, we clearly have $f(r,t) = (t-a) 2^{\floor{r/t}} + a 2^{\ceil{r/t}} - t$, where $a \in \{0, \dotsc, t-1\}$ is the integer with $a \equiv r \pmod{t}$. More importantly for our purposes, we can define $f$ recursively; it is easy to check that $f(r,t) = r$ for all $0 \le r \le t$ and $f(r,t) = 2f(r-t,t) + t$ for $r > t$. We use this recurrence and the previous lemma to prove the lower bound in our main theorem.

\begin{theorem}\label{mainbound}
	If $t \ge 1$ is an integer and $M$ is a simple rank-$r$ matroid with no $(t+1)$-claw, then $|M| \ge f(r,t)$. 
\end{theorem}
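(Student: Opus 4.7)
The plan is to induct on $r + t$, using Lemma~\ref{contract} together with the recursion $f(r, t) = 2f(r-t, t) + t$ for $r > t$. The base cases $r \le t$ are immediate: there $f(r, t) = r$, and any simple rank-$r$ matroid has at least $r$ elements. So assume $r > t \ge 1$ and split on whether $M$ contains a $t$-claw.

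First, suppose $M$ has a $t$-claw $F$. I would contract $F$: by Lemma~\ref{contract}, $\si(M/F)$ is $(t+1)$-claw-free, and since $F$ is a $t$-element independent flat, $\si(M/F)$ has rank $r - t$, so the inductive hypothesis gives $\elem(M/F) \ge f(r-t, t)$. The parallel classes of $M/F$ correspond bijectively with the rank-$(t+1)$ flats $L$ of $M$ containing $F$---this uses that $F$ is a flat, so $F$ contains no loops of $M/F$---and each such $L$ accounts for the elements of $L \setminus F$. The key observation is that $|L \setminus F| \ge 2$ for every such $L$: otherwise $L = F \cup \{e\}$ for a single $e \notin F$, but then $F \cup \{e\}$ is a $(t+1)$-element independent flat, contradicting the hypothesis. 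Summing yields
\[
|M| = |F| + \sum_L |L \setminus F| \ge t + 2\elem(M/F) \ge t + 2f(r-t, t) = f(r, t).
\]

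Otherwise $M$ has no $t$-claw. Then $M$ itself satisfies the theorem's hypothesis with $t$ replaced by $t - 1$, a valid inductive call since $r + (t-1) < r + t$, giving $|M| \ge f(r, t-1)$. I would then invoke a short monotonicity observation---$M_{r,t}$ is obtained from $M_{r,t-1}$ by splitting one binary projective geometry into two smaller ones, which cannot increase the total element count---to deduce $f(r, t-1) \ge f(r, t)$ and finish. Note that when $t = 1$ this case cannot arise, because any simple matroid of positive rank has a point, hence a $1$-claw; this anchors the secondary induction.

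The main obstacle I anticipate is the bookkeeping in the first case: identifying the parallel classes of $M/F$ with the rank-$(t+1)$ flats of $M$ through $F$ (which genuinely uses $F$ being a flat, not merely independent), and pinning down cleanly why $|L \setminus F| = 1$ would produce a $(t+1)$-claw. The monotonicity step in the second case is routine but should be verified carefully, for instance directly from the recursion for $f$.
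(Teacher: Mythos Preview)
Your argument is correct, and the core step---contracting a $t$-claw $F$, noting that each parallel class of $M/F$ has size at least $2$, and applying the recursion $f(r,t) = 2f(r-t,t)+t$---is exactly what the paper does. The difference lies in how the existence of a $t$-claw is secured. The paper works with a minimal counterexample, deletes a non-coloop $e$, finds a $(t+1)$-claw $S'$ in $M\del e$, and extracts a $t$-claw from $\cl_M(S')$ (which has rank $t+1$ and at most $t+2$ elements). You instead do a case split: if no $t$-claw exists, then $M$ already has no $((t-1)+1)$-claw, so induction on $r+t$ gives $|M| \ge f(r,t-1) \ge f(r,t)$.

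Your route trades the paper's deletion trick for the monotonicity $f(r,t-1)\ge f(r,t)$. That inequality is true, but your justification (``split one projective geometry into two smaller ones'') needs one more remark: splitting produces \emph{some} $t$-part decomposition with total size at most $f(r,t-1)$, and you then need that the balanced decomposition minimises $\sum(2^{r_i}-1)$ among all $t$-part decompositions, which is immediate from convexity of $n\mapsto 2^n$ (or, most cheaply, split off a rank-$0$ part to get equality and then rebalance). The paper's approach avoids this numerical lemma entirely and stays within a single induction parameter; yours is arguably more transparent about why the absence of a $t$-claw is a favourable situation. Both are short and valid.
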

\begin{proof}
	Let $M$ be a counterexample for which $r + |M|$ is minimized. If $M$ is a free matroid then clearly $r \le t$, in which case $f(r,t) = r \le |M|$ so $M$ is not a counterexample. Therefore $M$ has a non-coloop $e$. 
	
	Since $|M \del e| < |M| \le f(r,t)$ but $M \del e$ is not a counterexample, there must be a $(t+1)$-claw $S'$ in $M \del e$. Now the matroid $M|\cl_M(S')$ has rank $t+1$ and has at most $t+2$ elements, so has at most one circuit. There is thus a $t$-element subset $S$ of $\cl_M(S')$ containing at most $|C|-2$ elements of each $C$ circuit of $M$; this set $S$ is a $t$-claw. 
	
	If there is some rank-$(t+1)$ flat $F$ containing $S$ for which $|F-S|= 1$, then $F$ is a $(t+1)$-claw. Therefore every such flat satisfies $|F-S| \ge 2$; since $S$ is a flat, it follows that every parallel class of $M \con S$ has size at least $2$. Moreover, $\si(M \con S)$ is a rank-$(r-t)$ matroid that by Lemma~\ref{contract} has no $(t+1)$-claw; inductively we have $|\si(M \con S)| \ge f(r-t,t)$. Now 
	\[|M| = |S| + |M \con S| \ge t + 2|\si(M \con S)| \ge t + 2f(r-t,t) = f(r,t),\] as required. 
\end{proof}

\section{Equality}

In this section we characterize matroids for which the bound in Theorem~\ref{mainbound} holds with equality. We require two lemmas; the first (which uses Tutte's characterization of binary matroids as those with no $U_{2,4}$-minor~[\ref{tutte}])  corresponds to the case $t = 1$ of Theorem~\ref{mainbound}. 

\begin{lemma}\label{base}
	If $M$ is a simple rank-$r$ matroid with no $2$-claw, then $|M| \ge 2^r-1$. If equality holds then $M \cong \PG(r-1,2)$. 
\end{lemma}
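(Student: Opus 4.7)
My plan is to induct on $r$, with the base case $r=1$ trivial. For the inductive step, I first note that $M$ has no coloop: if $e$ were a coloop, then for any $x\ne e$ the line through $e$ and $x$ would be forced to equal $\{e,x\}$, giving a 2-claw. Consequently every element $e$ satisfies $r(M/e)=r-1$.

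Fix any $e\in E(M)$. Every line through $e$ has at least three points (else there is a 2-claw), so every parallel class of $M/e$ has at least two elements, giving $|M/e|\ge 2|\si(M/e)|$. By Lemma~\ref{contract}, $\si(M/e)$ has no 2-claw, and induction yields $|\si(M/e)|\ge 2^{r-1}-1$. Combining,
\[|M|=1+|M/e|\ge 1+2|\si(M/e)|\ge 1+2(2^{r-1}-1)=2^r-1.\]

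For the equality case, both of the displayed inequalities must be tight, and since the argument applies to every $e$, I extract two facts: every parallel class of $M/e$ has size exactly two (equivalently, every line through $e$ has exactly three points), and inductively $\si(M/e)\cong \PG(r-2,2)$. Since $e$ was arbitrary, every line of $M$ has exactly three points, so $M$ has no $U_{2,4}$-restriction. If $M$ had a $U_{2,4}$-minor at all, it would arise from a nonempty contraction, and so be a minor of $M/e$ for some $e$; being simple, it would persist as a minor of $\si(M/e)\cong \PG(r-2,2)$, contradicting the binarity of projective geometries. By Tutte's theorem $M$ is then binary, and the unique simple binary rank-$r$ matroid with $2^r-1$ elements is $\PG(r-1,2)$.

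The main obstacle is not the lower bound itself but the final leap in the equality case: one needs the strong inductive identification $\si(M/e)\cong \PG(r-2,2)$ for every $e$ in order to rule out $U_{2,4}$-minors of $M$ and invoke Tutte's characterization. Once binarity is established, the conclusion follows from the standard fact that $|\PG(r-1,2)|=2^r-1$ is the extremal count for simple binary rank-$r$ matroids.
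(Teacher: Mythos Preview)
Your proof is correct and arrives at the same endgame as the paper: in the equality case you show $\si(M/e)\cong\PG(r-2,2)$ for every $e$, deduce that $M$ has no $U_{2,4}$-minor, and invoke Tutte. The one genuine difference is in how the inductive bound is obtained. The paper restricts to a hyperplane $H$ avoiding $e$, applies induction to $M|H$, and counts $|M|\ge 2|H|+1$ via the three-point lines through $e$; only afterwards does it observe that $\si(M/e)\cong M|H$. You instead contract $e$ from the outset, invoke Lemma~\ref{contract} to transfer the no-$2$-claw hypothesis to $\si(M/e)$, and apply induction there. Your route is marginally more streamlined (it avoids the hyperplane detour and matches the contraction-based strategy used in the paper's main theorem), at the cost of depending on Lemma~\ref{contract}, which the paper's self-contained hyperplane argument does not need. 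One small quibble: the sentence ``Consequently every element $e$ satisfies $r(M/e)=r-1$'' follows from $M$ being loopless (i.e.\ simple), not from the absence of coloops; the no-coloop observation is really just the $r\ge 2$ case of ``every line has at least three points,'' so you could omit it.
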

\begin{proof}
	Let $M$ be a minor-minimal counterexample. Clearly $r(M) \ge 3$. Let $e \in E(M)$ and let $H$ be a hyperplane of $M$ not containing $e$. Since $M|H$ has no $2$-claw, we have $|H| \ge 2^{r-1}-1$ by the minimality of $M$. For each $x \in H$, the line spanned by $x$ and $e$ contains an element of $E(M)-\{e,x\}$, and these lines pairwise intersect only in $e$, so we see that $|M| \ge 2|H| + 1 \ge 2^r-1$ as required. 
	
	If $|M| = 2^r-1$, then equality holds above, so $|H| = 2^{r-1}-1$ and thus $M|H \cong \PG(r-2,2)$. Moreover, for each $x \in H$ we have $|\cl_M(\{e,x\})| = 3$ and $E(M) = \cup_{x \in H}(\cl_M(\{e,x\}))$, which implies that $\si(M \con e) \cong M|H \cong \PG(r-2,2)$ so $M \con e$ is binary. The choice of $e$ was arbitrary, so $M \con e$ is binary for all $e$; since $r \ge 3$ this gives that $M$ has no $U_{2,4}$-minor so is binary. Since $M$ is simple with $2^r-1$ elements, this implies  $M \cong \PG(r-1,2)$, a contradiction. 
\end{proof}

Note that if $t < r < 2t$ then $|M_{r,t}| = 2r-t$. In this range, the matroids $M_{r,t}$ are not the only ones satisfying the bound in Theorem~\ref{mainbound} with equality. The other examples include direct sums of circuits and coloops, and the matroid $M_{r,t}$ is the special case where all these circuits are triangles. The following lemma shows that these are the only examples. It also implies the characterization of the smallest $(t+1)$-claw-free matroids that are not required to be simple that was claimed in the introduction. 

\begin{lemma}\label{lowrank}
	Let  $r \ge t \ge 1$ be integers. If $M$ is a loopless rank-$r$ matroid with no $(t+1)$-claw, then $|M| \ge 2r - t$. If equality holds, then $M$ is the direct sum of $r-t$ circuits and some number of coloops. 
\end{lemma}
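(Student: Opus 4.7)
The plan is to proceed by induction on $r$, proving both the bound and the equality characterization simultaneously. The base case $r = t$ is immediate: a loopless rank-$r$ matroid has $|M| \ge r = 2r - t$, and equality forces $M$ to be a free matroid, i.e., a direct sum of $r$ coloops (and zero circuits).

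For the inductive step $r > t$, I would first dispose of the case in which $M$ has a coloop $e$. Because $M = (M \setminus e) \oplus \{e\}$, the claws of $M$ are precisely the claws of $M \setminus e$ together with sets of the form $\{e\} \cup F$ for $F$ a claw of $M \setminus e$; thus the largest claw of $M \setminus e$ has size at most $t - 1$. When $t \ge 2$, the inductive hypothesis applied with parameters $(r - 1, t - 1)$ gives $|M \setminus e| \ge 2r - t - 1$; when $t = 1$, the absence of a $1$-claw in $M \setminus e$ is equivalent to every parallel class of $M \setminus e$ having size at least $2$, which yields $|M \setminus e| \ge 2(r-1)$. Either way $|M| \ge 2r - t$, and the structure of the equality case is inherited from $M \setminus e$ by adjoining $e$ as an extra coloop.

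Now suppose $M$ has no coloop. Pick any $e \in M$, let $P$ denote its parallel class, and set $M'' = (M/e) \setminus (P - e)$, a loopless matroid of rank $r - 1$. If $|P| \ge 2$, then for any flat $F$ of $M''$ one computes $\cl_M(F \cup \{e\}) = F \cup P$, and since $F \cup \{e\}$ is independent and the elements of $P$ are all parallel to $e$, parallel exchange gives $\cl_M(F) = F$; thus every claw of $M''$ is already a claw of $M$, so induction with $(r - 1, t)$ yields $|M''| \ge 2r - t - 2$ and hence $|M| = |M''| + |P| \ge 2r - t$. If $|P| = 1$, any claw $F$ of $M'' = M/e$ extends to a claw $F \cup \{e\}$ of $M$ of size $|F|+1$, so the largest claw of $M''$ has size at most $t - 1$; induction with $(r - 1, t - 1)$, or the direct parallel-class bound when $t = 1$, again gives $|M| \ge 2r - t$.

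The main obstacle is the equality characterization in this no-coloop case. Since coloops of $M''$ would pull back to coloops of $M$, the inductive equality case forces $M''$ to be a pure direct sum $D^1 \oplus \cdots \oplus D^k$ of circuits. The element $e$ then attaches to some subset $S$ of these components through ambient circuits of the form $D^j \cup \{e\}$ in $M$, while the remaining $D^j$ for $j \notin S$ stay as circuits of $M$. Writing $s = |S|$ and analyzing the component $Q$ of $M$ that contains $e$, the key observation is that $\bigcup_{j \in S} (D^j \setminus \{x_j\})$, for any choice $x_j \in D^j$, is independent and a flat of $Q$, hence realizes a claw of size $r(Q) - 1$. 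A direct calculation then shows that the largest claw of $M$ has size $t + s$ when $|P| \ge 2$ and $t + s - 1$ when $|P| = 1$; the hypothesis that $M$ has no $(t+1)$-claw thus pins $s$ down to $0$ or $1$ respectively, and in either case $M$ is a direct sum of exactly $r - t$ circuits, as required.
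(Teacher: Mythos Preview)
Your argument is correct and takes a genuinely different route from the paper's. The paper works entirely in the dual $M^*$, splitting on whether $M^*$ has a circuit of size at least three. If not, $M^*$ is a direct sum of loops and parallel classes; picking two elements from each parallel class gives a set $U$ whose complement is a claw of $M$, and both the bound and the equality structure fall out immediately. If $M^*$ does have a circuit $C$ with $|C|\ge 3$, the paper assembles a spanning coloop-free set $X\subseteq E(M^*)$ as a union of fundamental circuits sharing the element of $C-B$, so that $|X|<2r(M^*)$; then $E-X$ is a claw of $M$ and one gets $|M|>2r-t$ strictly, so equality simply cannot occur in this case. Your inductive contraction argument avoids duality, which is arguably more elementary, but pays for it with a more delicate equality analysis. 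The one step that genuinely needs justification is that $F=\bigcup_{j\in S}(D^j\setminus\{x_j\})$ is a flat of $M$: since $D^j\setminus\{x_j\}$ is \emph{not} a flat of $M/e$, this is not automatic. It holds because any circuit $C\subseteq F\cup\{x_i\}$ of $M$ must (by the component structure of $M/e$) contain $D^i$; then $e\in\cl_M(D^i)\subseteq\cl_M(C)$ forces $r_{M/e}(C)=|C|-2$, while the direct-sum decomposition of $M/e$ gives $r_{M/e}(C)=|C|-1$. Once this is checked, your ``direct calculation'' is just combining $F$ with maximal claws (of size $|D^j|-2$) in the remaining circuit components, and in the $|P|\ge 2$ case treating the extra parallel element as one more degenerate circuit through $e$; the arithmetic then yields the claimed claw sizes $t+s$ and $t+s-1$. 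The paper's dual approach sidesteps all of this by showing that any departure from the ``all cocircuits small'' regime already forces strict inequality.
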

\begin{proof}
	
	 Suppose first that every circuit of $M^*$ has at most two elements. Then, since $M^*$ is coloopless, it is the direct sum of loops and parallel classes of size at least two; let $\cP$ be its set of parallel classes, so $r(M^*) = |\cP|$ and $r = |M| - |\cP|$. Let $U$ be a set comprising exactly two elements from each $P \in \cP$. Since $r(M^*|U) = r(M^*)$, the set $(E-U)$ is independent in $M$, and since $M^*|U$ is coloopless, the set $(E-U)$ is also a flat of $M$, so is a claw of $M$. By hypothesis, it follows that $t \ge |E - U| = |M| - 2|\cP|$. Therefore $2r - t \le 2(|M| - |\cP|) - (|M| - 2|\cP|) = |M|$, as required. If equality holds, then $t = |M| - 2|\cP| = r - |\cP|$, so $|\cP| = r - t$. By the definition of $\cP$, each $P \in \cP$ is a circuit of $M$, and each other element of $M$ is a coloop; this gives the required structure. 
	
	
	
	We may therefore assume $M^*$ has a circuit $C$ of size at least $3$. Let $B$ be a basis of $M^*$ containing all but one element of $C$. Since $M^*$ has no coloops, for each $x \in B$, there is a circuit $C_x$ of $M^*$ for which $x \in C_x$ and $|C_x \cap B| = |C_x|-1$; choose the $C_x$ so that $C_x = C$ for each $x \in C$. Let $X = \cup_{x \in X} C_x$. Since each $C_x$ contains only one element outside $B$ and the element of $C_0-B$ is chosen at least twice, we have $|X| < 2r(M^*)$. 
	
	By construction, the set $X$ contains a basis and, since $X$ is a union of circuits of $M^*$, the matroid $M^*|X$ has no coloops. Let $Y = E(M)-X$; by construction the set $Y$ is independent in $M$, and $M/Y$ has no loops, so $Y$ is a flat, and thus a claw, of $M$. Hence $|Y| \le t$ and so $|X| \ge |M|-t$. By our upper bound on $|X|$, this gives $|M| - t < 2r(M^*) = 2(|M|-r)$ and so $|M| > 2r-t$, as required. 
\end{proof}	

We are now ready to strengthen Theorem~\ref{mainbound} with an equality characterisation. Note that both outcomes n the equality case imply that $M$ has a $t$-claw. 

\begin{theorem}\label{maintech}
	Let $t \ge 1$. If $M$ is a simple rank-$r$ matroid with no $(t+1)$-claw, then $|M| \ge f(r,t)$. If equality holds, then either 
	\begin{itemize}
		\item $M \cong M_{r,t}$, or 
		\item $t < r < 2t$ and $M$ is the direct sum of coloops and exactly $r-t$ circuits, not all of which are triangles.
	\end{itemize}

\end{theorem}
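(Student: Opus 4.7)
My plan is a strong induction on $r$; the lower bound is Theorem~\ref{mainbound}, so only the equality characterization remains. For the base case $r \le 2t$, one has $f(r,t) = \max\{r, 2r-t\}$. When $r<t$, the bound $|M|=r$ in a simple rank-$r$ matroid forces $M$ to be the free matroid, i.e.\ $M \cong M_{r,t}$. When $t \le r \le 2t$, the bound coincides with that of Lemma~\ref{lowrank}, so $M$ is the direct sum of $r-t$ circuits and some coloops; a short count using simplicity (circuits of size $\ge 3$) bounds the number of coloops by $2t-r$. At $r=2t$ this forces no coloops and every circuit to be a triangle, giving $M \cong M_{2t,t}$; for $t<r<2t$ the two cases of the theorem correspond precisely to whether all circuits are triangles.

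For the inductive step $r > 2t$, we aim to show $M \cong M_{r,t}$. Since $|M|=f(r,t)>r$, $M$ has a non-coloop $e$; applying Theorem~\ref{mainbound} to $M \setminus e$ (same rank, $|M|-1$ elements) forces a $(t+1)$-claw $S'$ in $M\setminus e$, from which we extract a $t$-claw $T \subseteq \cl_M(S')$ as in the proof of Theorem~\ref{mainbound}. Tracing equality through
\[
f(r,t) \;=\; |M| \;=\; t + |M\con T| \;\ge\; t + 2\,|\si(M\con T)| \;\ge\; t + 2\,f(r-t,t) \;=\; f(r,t)
\]
forces (a) every parallel class of $M\con T$ has exactly two elements, and (b) $\si(M\con T)$ is itself extremal of rank $r-t$. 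By the inductive hypothesis, $\si(M\con T) \cong M_{r-t,t}$, or else $2t<r<3t$ and $\si(M\con T)$ is a direct sum of $r-2t$ circuits and coloops with some non-triangle circuit.

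The crux of the reconstruction is a signature analysis. For each parallel class $\{x,x'\}$ of $M\con T$, the rank-$(t+1)$ flat $\cl_M(T\cup\{x\})$ equals $T\cup\{x,x'\}$ and contains a unique circuit $\{x,x'\}\cup X_x$ of $M$ with $\emptyset\neq X_x\subseteq T$; call $X_x$ the \emph{signature} of the class. The two remaining steps are (i) showing every signature is a singleton, and (ii) showing signatures are constant on each connected component of $\si(M\con T) \cong M_{r-t,t}$. Granted these, letting $E_i$ be the union of $\{s_i\}$ with the parallel classes having signature $\{s_i\}$ yields a partition $E(M) = E_1 \sqcup \cdots \sqcup E_t$; a direct check using the binary structure of the components of $\si(M\con T)$ then shows each $M|E_i \cong \PG(r_i,2)$, so $M \cong M_{r,t}$. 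For (i), I plan an exchange argument: assuming $|X_x|\ge 2$ and $s \in X_x$, the swap $T' = (T\setminus\{s\})\cup\{x\}$ is again a $t$-claw (independence is immediate, and flatness follows since $T' \subseteq F := T\cup\{x,x'\}$ is a flat of $M|F$---a consequence of $F$ having a unique circuit---while $F$ is a flat of $M$, forcing $\cl_M(T') \subseteq F$ and hence $\cl_M(T') = T'$). Iterating the exchange across the elements of $X_x$ produces a chain of $t$-claws whose collective parallel-class structures pin down enough dependencies inside $F$ to exhibit a $(t+1)$-element independent flat of $M$, contradicting the hypothesis; this simultaneously rules out the exceptional inductive case for $2t<r<3t$ (so we may assume $\si(M\con T) \cong M_{r-t,t}$ throughout). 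For (ii), a line $\{a,b,c\}$ of $\si(M\con T)$ lifts either to a triangle of $M$ (when $\{a,b,c\}$ has rank $2$) or to an independent triple with $\{a,b,c\}\cup Y$ a $4$-circuit for some $\emptyset\neq Y\subseteq T$ (when $\{a,b,c\}$ has rank $3$); in both cases, circuit elimination against the signature triangles of $a,b,c$ forces $X_a = X_b = X_c$ (equal to $Y$ in the rank-$3$ case). The main obstacle is step (i), where the exchange argument must be executed carefully to leverage the rigidity of the rank-$(t+1)$ flat $F$ against the parallel class structure outside it.
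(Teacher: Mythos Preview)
Your framework matches the paper's: induct, find a $t$-claw $T$, and use equality in the displayed chain to force every parallel class of $M\con T$ to have size exactly $2$ and $\si(M\con T)$ to be extremal of rank $r-t$. The real gap is step~(i).

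The exchange you describe does produce a new $t$-claw $T'=(T\setminus\{s\})\cup\{x\}$, but it does not reduce anything: the parallel class of $x'$ in $M\con T'$ is $\{x',s\}$ (since $\cl_M(T'\cup\{x'\})=F$), and its signature with respect to $T'$ is $\{x\}\cup(X_x\setminus\{s\})$, of the same size $|X_x|$. So ``iterating across the elements of $X_x$'' just shuffles roles inside the fixed rank-$(t+1)$ flat $F$; it never forces a $(t+1)$-claw, because every $(t+1)$-subset of $F$ either contains the unique circuit or has closure $F$. You need an argument that reaches \emph{outside} $F$.

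The paper's device is global rather than local. Pick a $t$-pseudoclaw $U$ of $M\con S$ (this exists: since $r>2t$ we have $r-t>t$, and both inductive outcomes for $\si(M\con S)$ visibly contain a $t$-claw). Then $\cl_M(S\cup U)$ is a simple rank-$2t$ flat with exactly $|S|+2|U|=3t$ elements, so Lemma~\ref{lowrank} forces $M|\cl_M(S\cup U)$ to be a direct sum of $t$ triangles. This single application gives singleton signatures for every $e\in U$ and a bijection $\psi_U\colon U\to S$ at once, replacing your step~(i); it also treats the exceptional inductive shape of $\si(M\con S)$ uniformly (circuit-components and PG-components behave the same under the pseudoclaw swap $(K\setminus\{e\})\cup\{e'\}$), so there is no separate case to ``rule out''.

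Your step~(ii) and the final ``direct check'' that $M|E_i\cong\PG(r_i-1,2)$ are also underspecified; circuit elimination alone between $\{a,b,c\}$ and the three signature triangles does not obviously pin down $X_a=X_b=X_c$, and knowing $\si(M\con T)|E_i$ is binary does not by itself make $M|E_i$ binary. The paper handles both via an extra trick: it chooses $S$ to be \emph{non}-generic if possible (generic meaning no four-point line meets $S$ and exactly $f(r-t,t)$ triangles meet $S$), proves that $S$ is nonetheless generic, and concludes that \emph{every} $t$-claw of $M$ is generic. Then for each component $N$ and its lift $\wh N=M|(E(N)\cup\{\psi(N)\})$, one counts: every $e\in E(N)$ sits in some generic $t$-claw $U$, and $2\tau(e)+\sigma(e)=|N|$ together with $\sum_{u\in U}\tau(u)=f(r-t,t)$ forces $\sigma\equiv 0$, i.e.\ every line of $\wh N$ has three points. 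Lemma~\ref{base} then gives $\wh N\cong\PG(r(\wh N)-1,2)$, and a rank sum shows the $\wh N$ are skew, yielding $M\cong M_{r,t}$.
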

\begin{proof}

	Consider a counterexample $M$ for which $|M| + r$ is minimized. Clearly $r > t$, as otherwise $|M| \ge r = f(r,t)$ and there is nothing to prove. Therefore $M$ is not a free matroid, since otherwise any $(t+1)$-element subset of $E(M)$ is a claw. 
	
	\begin{claim}
		$M$ has a $t$-claw.
	\end{claim}
	\begin{subproof}
		Let $e$ be a non-coloop of $M$; by the minimality of $M$, the matroid $M \del e$ has a $(t+1)$-claw $F$; now $M|\cl_M(F)$ has rank at least $t+1$ and has at most $t+2$ elements, so has at most one circuit. There is thus a $t$-element subset of $\cl_M(F)$ that contains at most $|C|-2$ elements of each circuit $C$ of $M|\cl_M(F)$; this set is a $t$-claw of $M$. 
	\end{subproof}

  	Call a $t$-claw $S$ of $M$ \emph{generic} if no four-point line of $M$ intersects $S$, and  exactly $f(r-t,t)$ triangles of $M$ intersect $S$. Let $S$ be a $t$-claw of $M$, chosen \emph{not} to  be generic if such a choice is possible. 
 	
 	\begin{claim}
 		$|M| = f(r,t)$, each parallel class of $M \con S$ has size $2$, and $\elem(M \con S) = f(r-t,t)$. 
 	\end{claim}
 	\begin{subproof}
 		The matroid $M \con S$ has rank $r-t$ and, by Lemma~\ref{contract}, has no $(t+1)$-pseudoclaw. Therefore $\si(M \con S)$ has no $(t+1)$-claw, so $\elem(M \con S) \ge f(r-t,t)$. Moreover, if some parallel class $Y$ of $M \con S$ has size $1$, then $S \cup Y$ is a $(t+1)$-claw of $M$, so every parallel class of $M \con S$ has size at least $2$, giving $|M \con S| \ge 2 \elem(M \con S)$.  Therefore
 		\[f(r,t) \ge |M| \ge 2\elem(M \con S) + |S| \ge 2f(r-t,t) + t = f(r,t). \]
		Equality holds throughout, which gives the claim. 
 	\end{subproof}
 
 	The matroid $\si(M \con S)$ has no $(t+1)$-claw and has $f(r-t,t)$ elements, so inductively satisfies one of the conclusions of the theorem. For each component $N$ of $M \con S$, the matroid $\si(N)$ is either a circuit or a binary projective geometry.
 	
 	\begin{claim}\label{pclaws}
 		Let $e_1,e_2 \in E(M \con S)$. If $e_1$ and $e_2$ are in different components, then $M \con S$ has a $t$-pseudoclaw containing $e_1$ and $e_2$. If $e_1$ and $e_2$ are in the same component, then there is a $(t-1)$-element set $U$ such that $U \cup \{e_1\}$ and $U \cup \{e_2\}$ are both $t$-pseudoclaws of $M \con S$. 
 	\end{claim}
 	\begin{subproof}
 		
 		We first argue that $M\con S$ has a $t$-pseudoclaw. Since $\si(M \con S)$ satisfies one of the outcomes of the theorem, this only fails if $\si(M \con S) \cong M_{r-t,t}$ and $r-t < t$. If this holds then $|M| = f(r,t) = 2r-t$ and $M$ satisfies the hypothesis of Lemma~\ref{lowrank}, so is the direct sum of coloops and $r-t$ circuits. If these circuits are all triangles then $M \cong M_{r,t}$, and otherwise $M$ satisfies the second outcome of the theorem; both are contrary to the choice of $M$ as a counterexample. Therefore $M \con S$ has a $t$-pseudoclaw. 
 		
 		Since every component of $\si(M \con S)$ is a circuit or projective geometry, given any pseudoclaw $K$ of $M$ and any $e,e'$ in the same component of $M \con S$ for which $e \in K$ and $e' \notin K$, the set $(K - e) \cup \{e'\}$ is also a pseudoclaw. Since $M \con S$ has at least one $t$-pseudoclaw, both conclusions of the claim easily follow. 
 	\end{subproof}
 	
	The above claim implies in particular that every element of $M \con S$ is in a $t$-pseudoclaw.
 	
 	\begin{claim}\label{bij}
 		For each $t$-pseudoclaw $U$ of $M \con S$, there is a bijection $\psi_{U}$ from $U$ to $S$ so that for each $e \in U$, the flat $T_e = \cl_M(e,\psi_{U}(e))$ is a triangle of $M$, and so that $M|\cl_M(S \cup U) = \oplus_{e \in U} (M|T_e)$. 
 	\end{claim}
 	\begin{subproof}
		Since the closure of $U$ in $M \con S$ is obtained from $U$ by extending each element of $U$ once in parallel, we have $|\cl_M(S \cup U)| = |S| + 2|U| = 3t$. By Lemma~\ref{lowrank}, it follows that the simple rank-$2t$ matroid $M' = M|\cl_M(S \cup U)$ is the direct sum of $t$ circuits and some set of coloops, and therefore that is precisely the direct sum of $t$ triangles. Since $S$ is a $t$-claw of $M'$ and $U$ is a $t$-pseudoclaw of $M' \con S$, both $S$ and $U$ must be transversals of this set of triangles. The claim follows. 
 	\end{subproof}
 
	Every element $e$ of $M \con S$ is contained in a $t$-pseudoclaw, so the above claim implies that each such $e$ is in exactly one triangle that intersects $S$. Write $\psi(e)$ for the unique element of $S$ for which $e$ and $\psi(e)$ are contained in a triangle; we have $\psi_U(e) = \psi(e)$ for each $t$-pseudoclaw $U$ of $M \con S$ containing $e$. 
 	
 	Since $S$ is a claw and no rank-$1$ flat of $M \con S$ has more than two elements, no line of $M$ that intersects $S$ has more than three elements. Moreover, each $e \in E(M \con S)$ is in exactly one triangle of $M$ that intersects $S$, so the number of triangles of $M$ that intersect $S$ is exactly $\tfrac{1}{2}|M \con S| = \tfrac{1}{2}(2\elem(M \con S)) = f(r-t,t)$. Therefore $S$ is generic. It follows from the choice of $S$ that every $t$-claw of $M$ is generic.

 	\begin{claim}\label{nicetriangles}
 		For all $e_1, e_2 \in E(M \con S)$, we have $\psi(e_1) = \psi(e_2)$ if and only if $e_1$ and $e_2$ are in the same component of $M \con S$. Moreover, $M \con S$ has exactly $t$ components.
 	\end{claim}
 	\begin{subproof}
 		Suppose that $e_1$ and $e_2$ are in the same component of $M \con S$. By~\ref{pclaws}, there is a set $U \subseteq E(M \con S)$ such that $U \cup \{e_1\}$ and $U \cup \{e_2\}$ are both $t$-pseudoclaws of $M \con S$. For each $i \in \{1,2\}$, there is a bijection $\psi_i = \psi_{U \cup \{e_i\}}$ from $U \cup \{e_i\}$ to $S$, and moreover for each $e \in U$ we have $\psi_1(e) = \psi(e) = \psi_{2}(e)$. Therefore $\psi_1$ and $\psi_2$ agree on all $t-1$ elements of $U$; thus $\psi_{1}(e_1) = \psi_{1}(e_2)$ and so $\psi(e_1) = \psi(e_2)$. \
 		
		Suppose now that $e_1$ and $e_2$ are in different components of $M \con S$. By~\ref{pclaws} there is a $t$-pseudoclaw $U$ containing $e_1$ and $e_2$. Since $\psi_U$ is a bijection we have $\psi(e_1) = \psi_U(e_1) \ne \psi_U(e_2) = \psi(e_2)$, as required.
		
		It follows from the first part that the image of $\psi$ has size equal to the number of components of $M \con S$. But clearly the image of $\psi$ contains the image of $\psi_U$, which is equal to $S$, for each $t$-pseudoclaw $U$. Therefore $\psi$ has image $S$, so $M \con S$ has exactly $|S| = t$ components. 
 	\end{subproof}
 
 		
 
 	Let $\cN$ be the set of components of $M \con S$. By~\ref{nicetriangles}, for each $N \in \cN$ there is some $\psi(N)$ for which $\psi(e) = \psi(N)$ for each $e \in E(N)$. Since $|S| = |\cN| = t$, the $t$-tuple $\psi(N) : N \in \cN$ is a permutation of $S$. For each $N \in \cN$, let $\wh{N} = M|(E(N) \cup \psi(N))$.
 	
	\begin{claim}\label{shortlines}
		If $N \in \cN$ and $L$ is a line of $M$ intersecting $E(N)$, then either $|L| = 2$, or $|L| = 3$ and $L \subseteq E(\wh{N})$.  
	\end{claim}
	\begin{subproof}
		Let $U$ be a $t$-pseudoclaw of $M \con S$ containing an element $e \in E(N) \cap L$. Note that $U$ is a generic $t$-claw in $M$, which gives $|L| \le 3$.
		
		Suppose that $|L| = 3$. Note that $1 = r_{M \con S}(e) \le r_{M \con S}(L-S) \le r_M(L) = 2.$ If $r_{M \con S}(L-S) = 2$ then $L$ is a triangle of $M \con S$ that intersects the component $N$ of $M \con S$, so obviously $L \subseteq E(N)$. If $r_{M \con S}(L-S) = 1$ then $L \subseteq \cl_M(S \cup \{e\}) \subseteq \cl_M(S \cup U)$, so ~\ref{bij} gives $L = \cl_M(\{e, \psi(e)\})$. Since $L-\psi(e)$ is a two-element rank-$1$ set in $M \con S$, the third element of $L$ is the element of $N$ parallel to $e$, so $L \subseteq E(\wh{N})$ as required. 
	\end{subproof}

	For each $N \in \cN$ and $e \in E(N)$, let $\tau(e)$ be the number of $3$-element lines of $M$ containing $e$, and let $\sigma(e)$ be the number of elements of $E(\wh{N} \del e)$ that are \emph{not} in a $3$-element line of $M$ with $e$. By the previous claim we have $ 2\tau(e) + \sigma(e) = |\wh{N} \del e| = |N|$.

	\begin{claim}\label{npg}
		For each $N \in \cN$, every line of $E(\wh{N})$ has size $3$.
	\end{claim}
	\begin{subproof}
		Suppose not, so there is some $e \in E(N)$ for which $\sigma(e) > 0$. Let $U$ be a $t$-pseudoclaw of $M \con S$ containing $e$. Since $U$ is a generic $t$-claw of $M$, we have $\sum_{u \in U} \tau(u) = f(r-t,t)$, so 
		\begin{align*}
			|M| &= |S| + \sum_{N \in \cN} |N| \\
			&= t + \sum_{u \in U}(2\tau(u) + \sigma(u))\\
			&= t + 2f(r-t,t) + \sum_{u \in U} \sigma(u)\\ 
			&\ge f(r,t) + \sigma(e).
		\end{align*}
		Since $|M| = f(r,t)$ and $\sigma(e) > 0$, this is a contradiction.
	\end{subproof}
	Let $N \in \cN$. It is clear, since $N$ is obtained from $\wh{N} \con \psi(N)$ by $t-1$ successive extension-contraction operations, that $r(N) \le r(\wh{N})-1$. The matroid $\si(N)$ is a circuit or a binary projective geometry, so 
	\[|\wh{N}| = |N| + 1 \le 2(2^{r(N)}-1)+1 = 2^{r(N)+1}-1 \le 2^{r(\wh{N})}-1.\]
	By~\ref{npg} and Lemma~\ref{base} we have $|\wh{N}| \ge 2^{r(\wh{N})}-1$, so equality holds and therefore each matroid $\wh{N}$ is a binary projective geometry of rank $r(N) + 1$. The sets $E(\wh{N}): N \in \cN$ partition $E(M)$, so 
	\[r \le \sum_{N \in \cN} r(\wh{N}) = \sum_{N \in \cN}(r(N) + 1) = r(M \con S) + |\cN|  = t + r(M \con S) = r,\]
	so equality holds throughout, and the sets $E(\wh{N})$ are mutually skew in $M$. Thus $M$ is the direct sum of $t$ nonempty binary projective geometries. If $M$ has components of ranks $r_1,r_2$ with $r_2 \ge r_1 + 2$, then deleting both and replacing them with projective geometries of rank $r_2-1$ and $r_1 + 1$ respectively gives a matroid $M'$ with no $(t+1)$-claw satisfying 
	\[|M|-|M'| = 2^{r_2}+2^{r_1} - 2^{r_2-1} - 2^{r_1 + 1} = 2^{r_2-1} - 2^{r_1+1} > 0,\]
	which contradicts the minimality of $|M|$. It follows that no two components of $M$ have ranks differing by more than $1$, so $M \cong M_{r,t}$, contrary to the choice of $M$ as a counterexample.
\end{proof}

Finally, we prove the $t = 1$ case of Conjecture~\ref{agc} as promised.

\begin{lemma}\label{ag}
	If $M$ is a simple rank-$r$ triangle-free matroid with no $3$-claw, then $|M| \ge 2^{r-1}$. If equality holds, then $M \cong AG(r-1,2)$. 
\end{lemma}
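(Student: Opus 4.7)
The plan is to reduce this to Lemma~\ref{base} by contracting a single element of $M$, which upgrades ``no $3$-claw'' into ``no $2$-claw.'' Concretely, for any $e \in E(M)$, I would first verify that $M' := M/e$ is a simple rank-$(r-1)$ matroid with no $2$-claw. Simplicity follows from triangle-freeness: a parallel pair $\{x,y\}$ in $M'$ would make $\{e,x,y\}$ a triangle of $M$. For the $2$-claw property, any $2$-claw $\{x,y\}$ of $M'$ corresponds to $\cl_M(\{e,x,y\}) = \{e,x,y\}$ with $\{e,x,y\}$ independent (again by triangle-freeness), i.e., to a $3$-claw of $M$, which is excluded. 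Lemma~\ref{base} then yields $|M'| \ge 2^{r-1}-1$, and since $|M| = |M'|+1$ we obtain $|M| \ge 2^{r-1}$.

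Now suppose equality holds. Then $|M/e| = 2^{r-1}-1$ for every $e \in E(M)$, so the equality case of Lemma~\ref{base} forces $M/e \cong \PG(r-2,2)$, in particular binary, for every $e$. To lift this to binarity of $M$ itself via Tutte's excluded-minor theorem, I would observe that $M$ has no restriction isomorphic to $U_{2,4}$ (every three-element subset of $U_{2,4}$ is a triangle, incompatible with triangle-freeness of $M$), while any $U_{2,4}$-minor using at least one contraction would place a $U_{2,4}$-minor inside some $M/e$, contradicting its binarity. So $M$ itself has no $U_{2,4}$-minor and is binary.

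Having identified $M$ with a subset of $V \setminus \{0\}$ where $V = \GF(2)^r$, the final task is to show that $H := V \setminus M$ is a linear hyperplane. For each $e \in M$, simplicity of $M/e$ says that $x + e \notin M$ for every $x \in M \setminus \{e\}$, i.e., $M \cap (M + e) = \varnothing$; since $|M| + |M+e| = 2^r = |V|$, this forces $M + e = V \setminus M = H$. In particular $0 \in H$ (from $e+e = 0$), and for $h_1, h_2 \in H$ writing $h_i = m_i + e$ with $m_i \in M$ gives $h_1 + h_2 = m_1 + m_2$, which lies in $H$ because $M \cap (M + m_1) = \varnothing$ (applied at $m_2 \in M$) yields $m_1 + m_2 \notin M$. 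Therefore $H$ is a subspace of dimension $r-1$ and $M = V \setminus H \cong \AG(r-1,2)$.

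The step I expect to need the most care is the passage from ``$M/e \cong \PG(r-2,2)$ for every $e$'' to binarity of $M$; this is not formally automatic, but the no-$U_{2,4}$-restriction observation dispatches the base case of the minor hierarchy and triangle-freeness handles the rest. Once binarity is in hand, identifying $V \setminus M$ with a linear hyperplane is a short linear-algebraic calculation inside $\GF(2)^r$.
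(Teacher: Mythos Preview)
Your proof is correct and follows the paper's strategy almost step for step: contract an element $e$, observe that $M/e$ is simple (by triangle-freeness) and $2$-claw-free (since a $2$-claw of $M/e$ lifts to a $3$-claw of $M$), apply Lemma~\ref{base} for the bound, and in the equality case deduce $M/e \cong \PG(r-2,2)$ for every $e$, whence $M$ has no $U_{2,4}$-minor and is binary by Tutte's theorem.

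The one genuine difference is the final step. The paper invokes the Bose--Burton theorem as a black box: a simple triangle-free rank-$r$ binary matroid has at most $2^{r-1}$ elements, with equality only for $\AG(r-1,2)$. You instead give a direct coset argument inside $\GF(2)^r$, showing from triangle-freeness that $E(M)+e = V\setminus E(M)$ for each $e\in E(M)$, and then that $V\setminus E(M)$ is additively closed and hence a hyperplane. This makes your proof self-contained where the paper's is not, at the cost of a few extra lines; it is essentially a short proof of the relevant special case of Bose--Burton. A small cosmetic point: what you phrase as ``simplicity of $M/e$'' in that last paragraph is just triangle-freeness of $M$ read in binary coordinates, but since the two are equivalent for binary $M$ the logic is fine.
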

\begin{proof}
	We may assume that $r \ge 3$. We first show that every triple of distinct elements of $M$ is contained in a four-element circuit; indeed, given such a triple $I$, since $I$ is not a triangle or a $3$-claw, we have $r_M(I) = 3$ and $\cl_M(I) \ne I$. Thus there is some $x \in \cl_M(I)-I$. Since $M$ is triangle-free, no pair of elements of $I$ spans $x$, so $I \cup \{x\}$ is a $4$-element circuit. 
	
	Let $e \in E(M)$. Since $M$ is triangle-free, the matroid $M \con e$ is simple. If $M \con e$ has a $2$-claw $I$, then $I \cup \{e\}$ is clearly a $3$-claw of $M$; therefore $M \con e$ is $2$-claw-free and so $|M \con e| \ge 2^{r-1}-1$ by Lemma~\ref{base}. It follows that $|M| \ge 2^{r-1}$ as required. 
	
	If equality holds, then $M \con e \cong \PG(r-2,2)$ so $M \con e$ is binary. This holds for arbitrary $e \in E(M)$; it follows (since $r \ge 3$) that $M$ has no $U_{2,4}$-minor so is also binary. A simple rank-$r$ triangle-free binary matroid has at most $2^{r-1}$ elements and equality holds only for binary affine geometries (see [\ref{bb}], for example); therefore $M \cong \AG(r-1,2)$. 
\end{proof}

\section{Graphs}\label{s:graph}

Let $g(n,t): \bb{N} \times \bb{N} \to \bb{N}$ be defined recursively by \begin{itemize}
\item $g(n,t) = 0$ for $n < 2t$,  
 \item $g(n,t) = 3(n-2t)$  for  $2t \le n \le 4t$,  
\item  $g(n,t) = g(n-1,t) + \left \lceil \frac{n}{t} \right \rceil -1$ ,  for $n > 4t$.
\end{itemize}

It is easy to check that $|E(G_{n,t})|=g(n,t)$ for $n \geq 3t$ (although not for smaller $n$). The recursion for $n > 4t$ in fact also holds when $ 3t < n \le 4t$. Thus the next theorem implies Theorem~\ref{graph}. Like in the matroid setting, there is a range of cases ($2t \le n < 4t$) with non-unique tight examples, while the tight examples are just the expected ones for larger $n$.

\begin{theorem}\label{graph2eq} Let $n,t \ge 1$ be integers.
	Let $G$  be a simple graph on $n$ vertices such that no forest on $2t+1$ vertices is an induced subgraph of $G$. Then $|E(G)| \geq g(n,t)$.
	
	 If equality holds and $n < 4t$, then every component of $G$ is a complete graph on $1,3$ or $4$ vertices. If equality holds and $n \ge 4t$, then $G \cong G_{n,t}$. 
\end{theorem}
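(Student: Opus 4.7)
My strategy is to proceed by induction on $n$, mirroring the standard inductive proof of Tur\'an's theorem. The recursion defining $g(n,t)$ suggests the inductive step: find a vertex $v$ with $\deg_G(v) \ge g(n,t) - g(n-1,t)$; then $G - v$, being a subgraph of $G$, still has no induced forest on $2t+1$ vertices, so by induction $|E(G - v)| \ge g(n - 1, t)$, and hence $|E(G)| = |E(G-v)| + \deg_G(v) \ge g(n,t)$. The equality analysis is built in: each inequality must be tight, so $G - v$ must be extremal on $n-1$ vertices, and $v$ must attach to $G - v$ in a way that is (almost) forced.

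For $n < 2t$ the bound is trivial. For $2t \le n \le 4t$ I would split on the maximum degree. If $\Delta(G) \le 2$, then $G$ is a disjoint union of paths and cycles; the bound on the maximum induced forest forces at least $n - 2t$ of the components to be cycles (each on at least $3$ vertices), which leaves room for at most $6t - 2n$ path components, yielding $|E(G)| = n - (\text{number of path components}) \ge 3(n-2t)$, with equality only when the cycles are triangles and the remaining components are isolated vertices. If $\Delta(G) \ge 3$, pick $v$ with $\deg(v) \ge 3 = g(n,t) - g(n-1,t)$ and apply induction; for equality, a small case analysis shows that $v$ can attach to an extremal $G - v$ (whose components are $K_1$, $K_3$, or $K_4$) only in the way that completes an existing $K_3$ to a $K_4$, thus preserving the $\{K_1, K_3, K_4\}$-component structure (all other attachments create induced forests on $2t+1$ vertices). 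For $n > 4t$, I seek a vertex $v$ with $\deg_G(v) \ge \lceil n/t \rceil - 1$ and apply the same induction; equality forces $G - v \cong G_{n-1,t}$ by induction (since $n - 1 \ge 4t$), and then $v$ must be universal inside a clique of size $\lceil n/t \rceil$ in order not to create an induced forest on $2t+1$ vertices, giving $G \cong G_{n,t}$.

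The main obstacle is producing the required vertex $v$ in the inductive step when $n > 4t$. The natural route is a clique lemma: every $G$ satisfying the hypothesis contains a complete subgraph on at least $\lceil n/t \rceil$ vertices, so that any vertex of it has degree at least $\lceil n/t \rceil - 1$. A preliminary size bound is easy: since each non-trivial connected component contributes at least $2$ to any maximum induced forest (and each isolated vertex at least $1$), a straightforward pigeonhole on the components yields that the largest connected component of $G$ has at least $\lceil n/t \rceil$ vertices. The technical hurdle is to upgrade this to a clique of the required size: a non-clique connected component on $s$ vertices contains an induced $P_3$ and thus contributes at least $3$ to the maximum induced forest, so sufficiently large non-clique components consume too much of the $2t$-budget. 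Making this quantitative---by studying a maximum induced forest $F$ together with the attachments of vertices outside $F$ to the tree-components of $F$, and applying a local exchange/pigeonhole argument---should close the gap.
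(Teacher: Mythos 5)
Your overall architecture matches the paper's: induction on $n$, the $\Delta(G)\le 2$ analysis via paths and cycles for small $n$, deletion of a maximum-degree vertex $v$ with $\deg(v)\ge\max(3,\lceil n/t\rceil-1)$, and an equality analysis in which $G-v$ is extremal and $v$ must be complete to a smallest big component. However, the step you yourself flag as ``the main obstacle'' --- producing a vertex of degree at least $\lceil n/t\rceil-1$ --- is exactly the crux of the proof, and you have not closed it. Your proposed route (a clique lemma: every $G$ satisfying the hypothesis contains a clique on $\lceil n/t\rceil$ vertices) is left unproven, and the supporting observation you give does not suffice: a non-clique connected component contributes at least $3$ to a maximum induced forest \emph{regardless of its size}, so this alone cannot rule out one huge non-clique component swallowing most of the $n$ vertices while costing only $3$ of the $2t$-budget. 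Upgrading ``large connected component'' to ``large clique'' would require a genuine quantitative statement about induced forests in connected non-complete graphs, which is not routine; as stated, the clique claim is not even obviously true for non-extremal $G$ (for instance $C_5$ has no induced forest on $5$ vertices but clique number $2$, and while that example sits in the small-$n$ regime, it shows the implication is not formal).

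The paper avoids all of this with a short double count that you should adopt: take $X\subseteq V(G)$ maximal with $G[X]$ a forest (so $|X|\le 2t$), and let $Z$ be the non-isolated vertices of $G[X]$. Maximality forces every $w\notin X$ to close a cycle in $G[X\cup\{w\}]$, hence to have at least two neighbours in $Z$. Then $\sum_{z\in Z}\deg(z)\ge |Z|+2(n-2t)$, and since $|Z|\le 2t$ some vertex has degree at least $n/t-1$, hence at least $\lceil n/t\rceil-1$. This is a degree argument requiring no clique at all, and it is the missing lemma in your write-up. Your equality analysis for $n>4t$ is essentially right in spirit but also needs the paper's device of a $2t$-element set $U$ (all isolated vertices plus two vertices per big component, chosen to minimize neighbours of $v$) to force $v$ to be complete to some big component; ``$v$ must be universal inside a clique'' needs this justification rather than assertion.
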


\begin{proof}
	We prove the theorem by induction on $|V(G)|$. We may clearly assume that $n \geq 2t+1$, as otherwise the result is easy. Let $v$ be a vertex of $G$ of maximum degree. 
	
	If $\deg(v) \leq 2$, every component of $G$ is a path or a cycle. Let $S$ be the set of vertices of cycles of $G$, and $b$ be the number of cycles of $G$; note that $b \le \tfrac{1}{3}|S| \le \tfrac{1}{3}n$. Clearly $G$ contains an induced forest on $n - b$ vertices, so $n-b \le 2t$. This gives $n \le 2t + \tfrac{n}{3}$, so $n \le 3t$, which in turn implies that $g(n,t) = 3(n-2t) \le 3b$. On the other hand, we have $|E(G[S])| = |S|$, so 
	\[|E(G)| \ge |E(G[S])| = |S| \ge 3b \ge g(n,t),\]
	giving the bound. If equality holds, then $E(G) = E(G[S])$ and $b = \tfrac{1}{3}|S|$, so every component of $G$ is an isolated vertex or triangle. We have argued that $n \le 3t$; thus $G$ has the claimed structure. We may therefore assume that $\deg(v) \ge 3$. 
	
	Let $X \subseteq V(G)$ be maximal so that $G[X]$ is a forest, so  $|X| \leq 2t$. Let $Z$ be the set of non-isolated vertices of $G[X]$. As $G[X \cup \{w\}]$ contains a cycle for every $w \in V(G) \setminus X$, every such $w$ has at least two neighbors in $Z$.
	Thus \[\sum_{z \in Z}\deg(z) \geq |Z|+2|V(G)-X|  \geq |Z| + 2(n-2t).\] Hence there exists $z_0 \in Z$ such that $\deg(z_0) \geq 2(n-2t)/|Z| +1 \geq (n-2t)/t+1= n/t-1$; thus $\deg(v) \ge \left \lceil \frac{n}{t} \right \rceil -1$ by the choice of $v$. 
	
	By the above, we can assume that $\deg(v) \ge \max(3, \lceil \frac{n}{t} \rceil -1)$. Let $H = G - v$. It follows that 
	\[|E(G)| = |E(H)| + \deg(v) \ge g(n-1,t) + \max(3, \lceil \tfrac{n}{t} \rceil - 1) = g(n,t);\]
	the last equality is easy to check. This gives the desired bound. 
	
	Suppose now that equality holds. We have $\deg(v) = \max(3, \lceil \tfrac{n}{t} \rceil - 1)$, and $|E(H)| = g(n-1,t)$, so equality also holds for $H$. For each $d \ge 1$, let $n_d$ be the number of components of $H$ having exactly $d$ vertices. Call a component of $H$ with at least two vertices \emph{big}. 
	
	Let $U$ be a set containing all isolated vertices of $H$, and exactly two vertices from each big component of $H$, chosen so that $U$ contains as few neighbours of $v$ as possible. We argue that $|U| = 2t$. If $2t + 1 \le n \le 4t$, by induction, each component of $H$ is a complete graph on $1$, $3$ or $4$ vertices. It follows that $n-1 = n_1 + 3n_3 + 4n_4$ and $3(n-2t) = g(n,t) = 3n_3 + 6n_4$. This gives $|U| = n_1 + 2(n_3 + n_4) = (n-1) - \tfrac{1}{3}(3(n-2t)) = 2t$. If $n > 4t$, then $H \cong G_{n-1,t}$, so $H$ has $t$ components, all big; thus $|U| = 2t$.
	
	If each big component of $H$ contains a non-neighbour of $v$, then $U \cup \{v\}$ induces a forest on $2t+1$ vertices, a contradiction. Therefore $H$ has a big component $C$ with $V(C) \subseteq N(v)$. Induction implies that each big component of $H$ has at least $\max(3, \lceil \frac{n}{t} \rceil -1) = \deg(v)$ vertices; it follows that $|V(C)| = \deg(v) = \max(3, \lceil \frac{n}{t} \rceil -1)$, and that $G$ is obtained from $H$ by adding a new vertex with neighbourhood $V(C)$. 
	
	If $2t + 1 \le n \le 4t$, then $|V(C)| = \deg(v) = \max(3, \lceil \frac{n}{t} \rceil -1) = 3$, so $G$ is obtained from $H$ by turning a triangle into a $K_4$; thus, every component of $G$ is complete with $1$, $3$ or $4$ vertices. If $n < 4t$ then this implies that $M$ has the claimed structure, and if $n = 4t$ then the fact that $|E(G)| = g(4t,t) = 6t = \tfrac{3}{2}|V(G)|$ and $G$ has maximum degree $3$ implies that $G$ is $3$-regular, and so $G \cong G_{4t,t}$, as required. 
	
	If $n > 4t$, then $H \cong G_{n-1,t}$, then $|V(C)| = \deg(v) = \lceil \tfrac{n}{t} \rceil - 1 = \lfloor \tfrac{n-1}{t} \rfloor$, so $C$ is a smallest component of $H$. Thus $G \cong G_{n,t}$, as required. 	
\end{proof}	 
 
\section*{References}
\newcounter{refs}
\begin{list}{[\arabic{refs}]}
	{\usecounter{refs}\setlength{\leftmargin}{10mm}\setlength{\itemsep}{0mm}}
	
	\item\label{bb}
	R. C. Bose, R. C. Burton, 
	A characterization of flat spaces in a finite geometry and the uniqueness of the Hamming and the MacDonald codes, 
	\emph{J. Combin. Theory} 1 (1966), 96--104. 
	
	\item\label{bkknp}
	M. Bonamy, F. Kardo\v{s}, T. Kelly, P. Nelson, L. Postle,
	The structure of binary matroids with no induced claw or Fano plane restriction. arxiv:1806.04188
	
	\item\label{cf}
	P. Nelson, K. Nomoto, 
	The structure of claw-free binary matroids. arxiv:1807.11543

	\item \label{oxley}
	J. G. Oxley, 
	Matroid Theory,
	\emph{Oxford University Press, New York} (2011).
	
	\item \label{turan}	
P. Tur\'{a}n,
 On an extremal problem in graph theory, 
 \emph{Matematikai \'{e}s Fizikai Lapok} (1941)
  (in Hungarian),  436–-452.
  
  	\item \label{tutte}	
 W.T. Tutte, Lectures on matroids, \emph{Journal of Research of the National Bureau of Standards} (1965), 1–-47. 
\end{list}

\end{document}